\let\oldmarginpar\marginpar
\renewcommand\marginpar[1]{\-\oldmarginpar[\raggedleft\footnotesize #1]
{\raggedright\footnotesize #1}}
\author{Larsen Louder}
\address{Department of Mathematics, University College London, London, England}
\email{l.louder@ucl.ac.uk}
\author{D.~B.~McReynolds}
\address{Department of Mathematics\\Purdue University\\West Lafayette, IN USA}
\email{dmcreyno@purdue.edu}
\author{Priyam Patel}
\address{Department of Mathematics\\ University of California, Santa Barbara\\ Santa Barbara, CA USA}
\email{patel@math.ucsb.edu}
\newtheorem{thm}{Theorem}[section]
\newtheorem{cor}[thm]{Corollary}
\newtheorem{ques}[thm]{Question}
\theoremstyle{definition}
\newtheorem{lemma}[thm]{Lemma}
\theoremstyle{remark}
\newtheorem{remark}{Remark}
\theoremstyle{definition}
\DeclareMathOperator{\Aut}{Aut}
\DeclareMathOperator{\GL}{GL}
\DeclareMathOperator{\D}{D}
\DeclareMathOperator{\Eval}{Eval}\DeclareMathOperator{\Hom}{Hom}
\newcommand{\vp}{\varphi}
\newcommand{\Sep}{\mathbf{Sep}}
\newcommand{\iny}{\infty}
\newcommand{\norm}[1]{\left\vert \left\vert #1\right\vert\right\vert}
\newcommand{\innp}[1]{\left< #1 \right>}
\newcommand{\abs}[1]{\left\vert#1\right\vert}
\newcommand{\set}[1]{\left\{#1\right\}}
\newcommand{\su}{\subset}
\newcommand{\lra}{\longrightarrow}
\newcommand{\B}[1]{\ensuremath{\mathbf{#1}}}
\newcommand{\N}{\ensuremath{\mathbf{N}}}
\newcommand{\Q}{\ensuremath{\mathbf{Q}}}
\newcommand{\Z}{\ensuremath{\mathbf{Z}}}
\newcommand{\C}{\ensuremath{\mathbf{C}}}
\title{\textbf{Zariski Closures and Subgroup Separability}}
\begin{document}


\begin{abstract}
The main result of this article is a refinement of the well-known subgroup separability results of Hall and Scott for free and surface groups. We show that for any finitely generated subgroup, there is a finite dimensional representation of the free or surface group that separates the subgroup in the induced Zariski topology. As a corollary, we establish a polynomial upper bound on the size of the quotients used to separate a finitely generated subgroup in a free or surface group.
\end{abstract}

\maketitle

\section{Introduction}\label{S:Intro}

Given an algebraically closed field $\Omega$, a finite dimensional $\Omega$--vector space $V$, a finitely generated group $\Gamma$, and a homomorphism $\rho\colon \Gamma \to \GL(V)$, we have the subspace topology on $\rho(\Gamma)$ coming from the Zariski topology on $\GL(V) < \mathrm{End}(V)$. The pullback of this topology to $\Gamma$ under $\rho$ is called the Zariski topology associated to $\rho$. The primary goal of this article is to establish separability properties for $\Gamma$ by using Zariski topologies associated to finite dimensional representations. The foundational result was established by Mal'cev \cite{Mal} who proved that if $\rho\colon \Gamma \to \GL(V)$ is injective (e.g. $\Gamma$ is linear), then $\Gamma$ is residually finite. 

We say $\Gamma$ is \textbf{subgroup separable} (also called {\bf LERF}) if every finitely generated subgroup is closed in the profinite topology. Our main result shows that finitely generated subgroups of free and surface groups can be separated in the Zariski topology associated to a representation that depends on the subgroup.

\begin{thm}\label{T:MainZar}
Let $\Gamma$ be a free group with rank $r>1$ or the fundamental group of a closed surface $\Sigma_g$ with genus $g>1$. If $\Delta_0$ is a finitely generated subgroup of $\Gamma$, then there exists a faithful representation $\rho_{\Delta_0}\colon \Gamma \to \GL(V)$ such that $\overline{\rho_{\Delta_0}(\Delta_0)} \cap \rho_{\Delta_0}(\Gamma) = \rho_{\Delta_0}(\Delta_0)$, where $\overline{\rho_{\Delta_0}(\Delta_0)}$ is the Zariski closure of $\rho_{\Delta_0}(\Delta_0)$. That is, $\Delta_0$ is closed in the Zariski topology associated to $\rho_{\Delta_0}$.
\end{thm}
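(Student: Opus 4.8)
I would prove this by combining two facts about $\Gamma$. The first is the structural heart of the theorems of Hall and Scott: every finitely generated $\Delta_0 \le \Gamma$ is a \emph{virtual retract}, i.e.\ there is a subgroup $H$ with $\Delta_0 \le H \le \Gamma$, $[\Gamma:H] < \infty$, and a retraction $r \colon H \to \Delta_0$ (a homomorphism with $r|_{\Delta_0} = \mathrm{id}_{\Delta_0}$); for $\Gamma$ free this is Hall's theorem that $\Delta_0$ is a free factor of a finite-index subgroup, and for $\Gamma = \pi_1(\Sigma_g)$ it follows from Scott's description of finitely generated subgroups as carried by $\pi_1$-injectively embedded subsurfaces of a finite cover, a retraction coming from collapsing the complementary pieces. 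The second fact is that $\Gamma$ is linear: I would fix once and for all a faithful finite-dimensional representation $\tau \colon \Gamma \to \GL(U)$ over $\C$ (e.g.\ $F_r, \pi_1(\Sigma_g) \hookrightarrow \SL_2(\C)$). The representation to build is then $\rho_{\Delta_0} = \Psi \oplus \tau$, where $\Psi = \mathrm{Ind}_H^\Gamma(\phi)$ is induced from an auxiliary representation $\phi$ of $H$ manufactured from $r$. Faithfulness of $\rho_{\Delta_0}$ is immediate from that of $\tau$, and since $\Delta_0 \subseteq \overline{\rho_{\Delta_0}(\Delta_0)}$ always, the content is the reverse inclusion $\overline{\rho_{\Delta_0}(\Delta_0)} \cap \rho_{\Delta_0}(\Gamma) \subseteq \rho_{\Delta_0}(\Delta_0)$.

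The engine is a ``diagonal trick''. Define $\phi \colon H \to \GL(U) \times \GL(U)$ by $\phi(h) = (\tau(r(h)),\, \tau(h))$; this is a homomorphism precisely because $r$ is, and it is faithful because its second coordinate is. On $\Delta_0$ one has $r = \mathrm{id}$, so $\phi(\Delta_0)$ lies in the diagonal subgroup $D = \{(x,x) : x \in \GL(U)\}$, which is Zariski closed, whence $\overline{\phi(\Delta_0)} \subseteq D$. Therefore, if $h \in H$ and $\phi(h) \in \overline{\phi(\Delta_0)}$, then $\tau(r(h)) = \tau(h)$, so $r(h) = h$ by faithfulness of $\tau$, and since $r(h) \in \Delta_0$ we conclude $h \in \Delta_0$. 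With the trivial reverse inclusion this gives $\phi^{-1}\big(\overline{\phi(\Delta_0)}\big) = \Delta_0$: already on $H$, the subgroup $\Delta_0$ is closed in the Zariski topology attached to $\phi$.

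It remains to transport this from $H$ up to $\Gamma$ via induction. Realize $\Psi = \mathrm{Ind}_H^\Gamma(\phi)$ on $V_\Psi = \{f \colon \Gamma \to U \oplus U \mid f(\gamma h) = \phi(h)^{-1} f(\gamma) \text{ for } h \in H\}$ with $(\gamma_0 f)(\gamma) = f(\gamma_0^{-1}\gamma)$, and let $L \subseteq V_\Psi$ be the subspace of functions supported on the coset $H$, identified with $U \oplus U$ via $f \mapsto f(1)$. A direct computation gives: (i) $\Psi(\gamma)$ carries $L$ onto the subspace of functions supported on $\gamma H$, so $\Psi(\gamma)L \subseteq L$ forces $\gamma \in H$, and for $\gamma = h \in H$ one has $\Psi(h)|_L = \phi(h)$; and (ii) the set $\{A : AL \subseteq L\}$ is Zariski closed and contains $\Psi(H) \supseteq \Psi(\Delta_0)$, hence contains $\overline{\Psi(\Delta_0)}$, and on this closed set the restriction morphism $A \mapsto A|_L$ carries $\Psi(\Delta_0)$ into $\phi(\Delta_0)$, so it carries $\overline{\Psi(\Delta_0)}$ into $\overline{\phi(\Delta_0)}$. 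Now take $\gamma \in \Gamma$ with $\rho_{\Delta_0}(\gamma) \in \overline{\rho_{\Delta_0}(\Delta_0)}$; projecting to the $\Psi$-block gives $\Psi(\gamma) \in \overline{\Psi(\Delta_0)}$, so by (ii) the operator $\Psi(\gamma)$ preserves $L$, hence $\gamma \in H$ by (i); then $\phi(\gamma) = \Psi(\gamma)|_L \in \overline{\phi(\Delta_0)}$, so $\gamma \in \Delta_0$ by the diagonal trick, i.e.\ $\rho_{\Delta_0}(\gamma) \in \rho_{\Delta_0}(\Delta_0)$. (One could instead append the permutation representation of $\Gamma$ on $\Gamma/H$ and use its finite image to detect membership in $H$ directly.)

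I expect the virtual-retract input to be the crux: it is the only non-formal ingredient, it is exactly where Hall's and Scott's theorems enter, and it is why the statement is special to free and surface groups. The subtlest point in the formal part is (ii) — making ``restrict to the distinguished summand'' into an honest morphism of varieties defined on a Zariski-closed set containing $\overline{\Psi(\Delta_0)}$ and compatible with $\phi$ on $H$ — since that is where the linear-algebraic side (Zariski closures) and the representation-theoretic side (induction) must be reconciled. The corollary should then follow by bounding $[\Gamma:H]$ and the complexity of $r$ polynomially in a generating set for $\Delta_0$ (via Stallings-type arguments) and extracting a separating finite quotient from the finite-image part of an appropriate variant of $\rho_{\Delta_0}$.
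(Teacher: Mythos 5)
Your formal machinery is sound and in one place more complete than the paper's own write-up: the ``diagonal trick'' with the pair $(\tau\circ r,\ \tau)$ is exactly the paper's mechanism (it also builds two homomorphisms of a finite-index subgroup that agree precisely on $\Delta_0$ and then uses Zariski-closedness of the diagonal), and your steps (i)--(ii) supply the invariant-subspace argument for why induction to $\Gamma$ preserves the closure property, a point the paper asserts without detail. The real difference is where the second homomorphism comes from: the paper never uses retractions. It produces an automorphism $\psi$ of the finite-index subgroup whose fixed-point set is exactly $\Delta_0$ (conjugating the complementary free factor by an element of $\Delta_0$ in the free case; composing Dehn twists about the boundary curves of Scott's subsurface in the surface case) and feeds the pair $(\rho_0,\rho_0\circ\psi)$ into the diagonal trick.

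The gap is your virtual-retract input in the surface case. For free groups Hall's theorem does give a retraction (kill the complementary free factor), but Scott's theorem does not: if $\Sigma'$ is the incompressible subsurface of the finite cover $\Sigma_{g_0}$ carrying $\Delta_0$, the inclusion $\pi_1(\Sigma')\to\pi_1(\Sigma_{g_0})$ need not admit a retraction, and ``collapsing the complementary pieces'' does not define one, since it kills the boundary curves of $\Sigma'$, which are nontrivial in $\pi_1(\Sigma')$. Concretely, take $\Delta_0=\langle c\rangle$ with $c$ a separating simple closed curve on a closed surface and $\Sigma'$ an annular neighborhood of $c$: any homomorphism from the closed surface group to $\Z$ factors through first homology and kills $[c]=0$, so no retraction onto $\langle c\rangle$ exists, even though $\Sigma'$ is exactly the kind of subsurface Scott's theorem provides. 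The statement you need --- every finitely generated subgroup of a closed surface group is a retract of a finite-index subgroup --- is true, but it is a theorem of Long and Reid (property LR; the paper's bibliography item \cite{LR}), proved either by a more careful choice of cover together with an explicit construction or via their reflection-group/retraction argument; it does not follow from Scott's subsurface description alone. If you cite that result (or replace your retraction by the paper's boundary Dehn-twist automorphism, which exists whether or not the subsurface separates), the rest of your argument goes through as written.
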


If a finitely generated subgroup $\Delta_0$ is Zariski closed in the sense above and $\gamma \notin \Delta_0$, then there is a homomorphism $\vp\colon \Gamma \to Q$ such that $\abs{Q}<\iny$ and $\vp(\gamma) \notin \vp(\Delta_0)$. Letting $\Lambda = \Delta_0 \cdot \ker(\vp) < \Gamma$, we see that $\Lambda$ is a finite index subgroup of $\Gamma$ of index at most $\abs{Q}$ with $\Delta_0 \leq \Lambda$ and $\gamma \notin \Lambda$. 

\begin{cor}\label{C:EffectiveSep}
Let $\Gamma$ be a free group with rank $r>1$ or the fundamental group of a closed surface $\Sigma_g$ with genus $g>1$, and let $\mathfrak{X}$ be a finite generating set for $\Gamma$ with $\norm{\cdot}_\mathfrak{X}$ the associated norm. If $\Delta_0 < \Gamma$ is a finitely generated subgroup, then there exists a constant $D > 0$ such that for each $\gamma \in \Gamma - \Delta_0$, there exists a homomorphism $\vp\colon \Gamma \to Q$ with $\vp(\gamma) \notin \vp(\Delta_0)$ and $\abs{Q} \leq \norm{\gamma}_\mathfrak{X}^D$. Letting $\Lambda = \Delta_0 \cdot \ker(\vp)$, $\Lambda$ is a finite index subgroup of $\Gamma$, of index at most $\abs{Q} \leq \norm{\gamma}_\mathfrak{X}^D$, such that $\Delta_0 \leq \Lambda$ and $\gamma \notin \Lambda$. Moreover, the index of the normal core of the subgroup $\Lambda$ is bounded above by $\abs{Q}$.
\end{cor}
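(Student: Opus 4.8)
The plan is to make the faithful representation produced by Theorem~\ref{T:MainZar} arithmetically explicit and then to separate $\gamma$ from $\Delta_0$ by reducing it modulo a small prime. Fix $\Delta_0$ and let $\rho = \rho_{\Delta_0}\colon \Gamma \to \GL(V) = \GL_d(\Omega)$ be the representation of Theorem~\ref{T:MainZar}, so that $\overline{\rho(\Delta_0)} \cap \rho(\Gamma) = \rho(\Delta_0)$, with the closure taken in $\GL_d$; fix polynomials $f_1,\dots,f_m$ cutting out $\overline{\rho(\Delta_0)}$. The first thing I would do is record that all the relevant data --- the entries of $\rho(x^{\pm1})$ for $x$ in the generating set $\mathfrak X$, and the coefficients of the $f_j$ --- lie in a finitely generated subring of $\Omega$, and then argue that the construction of Theorem~\ref{T:MainZar} can be run so as to produce $\rho$ with values in $\GL_d(R)$ and $f_j$ with coefficients in $R$, where $R = \mathcal O_K[1/N]$ is a ring of $S$-integers in a number field $K$, with $d$, $K$, and $N$ depending only on $\Delta_0$; ideally the model is \emph{integral} up to inverting the fixed integer $N$. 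Let $B$ bound the archimedean absolute values of the entries of the $\rho(x^{\pm1})$, $x \in \mathfrak X$.

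Next, given $\gamma \in \Gamma - \Delta_0$ with $n = \norm{\gamma}_{\mathfrak X}$, I would write $\gamma$ as a product of at most $n$ elements of $\mathfrak X^{\pm1}$ and note that the entries of $\rho(\gamma)$ are elements of $R$ of archimedean absolute value at most $(dB)^n$ with denominators dividing $N^n$. Since $\rho$ is faithful and $\overline{\rho(\Delta_0)}\cap\rho(\Gamma) = \rho(\Delta_0)$, the point $\rho(\gamma)$ lies in $\rho(\Gamma)$ but not in $\rho(\Delta_0)$, hence not on $\overline{\rho(\Delta_0)}$, so $a := f_j(\rho(\gamma)) \neq 0$ in $R$ for some $j$. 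Because the $f_j$ have degree and coefficients bounded in terms of $\Delta_0$ only, $a$ is a nonzero element of $R$ whose archimedean absolute values and whose denominator are all at most $e^{cn}$ for a constant $c = c(\Delta_0,\mathfrak X)$; in particular the numerator ideal of $a$ has absolute norm at most $e^{c'n}$, so it is divisible by $O(n)$ prime ideals of $\mathcal O_K$.

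It then follows that there is a rational prime $p$, of size bounded polynomially in $n$, lying below none of those prime ideals and below no prime of $S$ and not dividing $N$; fixing a prime $\mathfrak p \mid p$ of $\mathcal O_K$, reduction modulo $\mathfrak p$ gives $\varphi\colon \Gamma \xrightarrow{\ \rho\ } \GL_d(R) \to \GL_d(R/\mathfrak p) = \GL_d(\mathbb F_q)$ with $q = p^{f}$, $f \le [K:\Q]$. For every $\delta \in \Delta_0$, $\rho(\delta) \in \overline{\rho(\Delta_0)}$ forces $f_j(\rho(\delta)) = 0$ in $R$, hence $f_j(\varphi(\delta)) = 0$ in $\mathbb F_q$, whereas $f_j(\varphi(\gamma)) = \overline a \neq 0$; thus $\varphi(\gamma)\notin\varphi(\Delta_0)$. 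Taking $Q = \varphi(\Gamma)$ gives $\abs Q \le q^{d^2} \le p^{d^2[K:\Q]}$, which is $O(\norm{\gamma}_{\mathfrak X}^{D_0})$ for an explicit $D_0 = D_0(\Delta_0,\mathfrak X)$; enlarging the exponent to absorb the implied constant (and separating the finitely many $\gamma$ of small norm by a bounded quotient) yields a $D$ as claimed. Finally $\Lambda := \Delta_0\cdot\ker\varphi$ is a subgroup since $\ker\varphi\trianglelefteq\Gamma$, contains $\Delta_0$, has $[\Gamma:\Lambda]\le[\Gamma:\ker\varphi] = \abs Q$, and omits $\gamma$ (a factorization $\gamma = \delta k$ with $\delta\in\Delta_0$, $k\in\ker\varphi$ would give $\varphi(\gamma)=\varphi(\delta)$); and the normal core of $\Lambda$ contains the normal subgroup $\ker\varphi$, so $[\Gamma:\mathrm{core}(\Lambda)]\le[\Gamma:\ker\varphi]=\abs Q$.

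The step I expect to be the main obstacle is the first one: extracting from the proof of Theorem~\ref{T:MainZar} a representation over a ring of $S$-integers (with a bounded-denominator, essentially integral, matrix model) together with bounded-complexity defining equations for $\overline{\rho(\Delta_0)}$, all controlled by $\Delta_0$ alone. If that construction naturally lives over a polynomial ring in auxiliary parameters --- coordinates on a representation variety, say --- one must specialize those parameters, to algebraic numbers of bounded height or directly into a small finite field, while preserving both faithfulness on the finitely many relevant group elements and the identity $\overline{\rho(\Delta_0)}\cap\rho(\Gamma)=\rho(\Delta_0)$; making this effective needs quantitative elimination theory (effective Noether normalization or B\'ezout-type degree bounds). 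By contrast, the height estimates and the prime-avoidance argument above are routine once such a model is available.
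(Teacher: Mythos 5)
Your overall strategy (push the Zariski-closure statement of Theorem \ref{T:MainZar} into an arithmetic setting, find a defining polynomial of $\overline{\rho(\Delta_0)}$ that is nonzero at $\rho(\gamma)$, and kill it in a small finite quotient) is the same as the paper's, but the arithmetic step is genuinely different, and it is exactly at the point you yourself flag that your argument has a gap. You \emph{assume} that the representation of Theorem \ref{T:MainZar} can be arranged to land in $\GL_d(\mathcal{O}_K[1/N])$ with the $f_j$ having $S$-integral coefficients, and you propose to obtain this by specializing auxiliary parameters ``while preserving faithfulness on the finitely many relevant group elements.'' That fix does not suffice as stated: the identity $\overline{\rho(\Delta_0)}\cap\rho(\Gamma)=\rho(\Delta_0)$ is one condition for \emph{each} of the infinitely many $\gamma\in\Gamma-\Delta_0$, and a countable union of proper subvarieties can very well swallow all algebraic specializations, so an effective-elimination argument controlling only finitely many elements does not produce the model you need. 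In this particular paper the gap is fillable, but not by generic specialization: the strongly distinguishing pair is explicit, $\Phi=\rho_0\times(\rho_0\circ\psi)$ with $\psi$ the concrete automorphism of Lemma \ref{L:FreeAutoTwist} or \ref{L:SurfaceAutoTwist}, so it is enough to choose the single faithful representation $\rho_0$ of $\Lambda$ with entries in a number ring (e.g. $\SL(2,\Z)$ for free groups, an algebraic-entry Fuchsian representation for surface groups) and then induct; that keeps everything over a ring of $S$-integers and makes your height-plus-prime-avoidance computation (which is correct and routine) go through.

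The paper avoids this issue altogether: in the proof of Corollary \ref{C:EffectiveSep} it only arranges that the Zariski closures $\B{G}=\overline{\rho(\Gamma)}$ and $\B{H}=\overline{\rho(\Delta_0)}$ are $\overline{\Q}$--defined, and it allows the matrix entries of $\rho(\Gamma)$ to be transcendental. The finite quotient is then produced by the proof of Lemma \ref{L:BasicLemma1}: $\Gamma$ sits in $\GL(n,R_\Gamma)$ for a finitely generated ring $R_\Gamma$ obtained from $\mathcal{O}_K[T_1,\dots,T_d]$ by inverting finitely many elements, and Lemma 2.1 of \cite{BouMcR15} gives a finite ring quotient $R_\Gamma\to R$ in which $P_\gamma=Q_{j_\gamma}(\rho(\gamma))$ survives, with $\abs{R}$ bounded polynomially in the degree and coefficient size of $P_\gamma$, hence in $\norm{\gamma}_\mathfrak{X}$. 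So the paper's route replaces your explicit archimedean height estimates and prime-avoidance (essentially a reproof of the number-ring case of that lemma) by a citation that also covers the transcendental case, whereas your route, once the $S$-integral model is actually justified, yields the same polynomial bound with more self-contained estimates. Your endgame (reduction mod $\mathfrak{p}$, $\varphi(\gamma)\notin\varphi(\Delta_0)$, $\Lambda=\Delta_0\cdot\ker\varphi$, and the normal core bound via $\ker\varphi\leq\Lambda$) matches the paper.
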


Deducing Corollary \ref{C:EffectiveSep} from Theorem \ref{T:MainZar} is straightforward and uses methods from \cite{BouMcR15}. The constant $D$ explicitly depends on the subgroup $\Delta_0$ and the dimension of $V$ in Theorem \ref{T:MainZar}. For a general finite index subgroup, the crude upper bound for the index of the normal core is factorial in the index of the subgroup. It is for this reason that we include the statement regarding the normal core of $\Lambda$ at the end of Corollary \ref{C:EffectiveSep}. 

Recently, several effective separability results have been established; see \cite{BouRabee10}, \cite{BHP}, \cite{BK}, \cite{BM11}, \cite{BouMcR15} \cite{Buskin}, \cite{HP}, \cite{KM}, \cite{KT}, \cite{KMS}, \cite{LLM}, \cite{Patel}, \cite{Patel2}, \cite{Peng}, \cite{Rivin}, and \cite{Thom}. Most relevant here are the papers \cite{HP} and \cite{Patel} where bounds on the index of the separating subgroups for free and surface groups given. We compare the bounds of Corollary \ref{C:EffectiveSep} to the results in \cite{HP} and \cite{Patel} in \S \ref{S:FinalRemarks}.


\paragraph{\textbf{Acknowledgements.}} The authors would like to thank Khalid Bou-Rabee, Mark Hagen, Feng Luo, and Alan Reid for conversations on the work presented here. The second author was partially supported by NSF grants DMS-1105710 and DMS-1408458.

\section{Preliminaries}\label{S:Prelim}

\paragraph{\textbf{Complex algebraic groups.}}

Given a complex algebraic group $\B{G}<\GL(n,\C)$, there exist polynomials $Q_1,\dots,Q_r \in \C[X_{i,j}]$ such that
\[ \B{G} = \B{G}(\C) = V(Q_1,\dots,Q_r) = \set{X \in \C^{n^2}~:~Q_k(X) = 0,~k=1,\dots,r}. \]
We refer to the polynomials $Q_1,\dots,Q_r$ as defining polynomials for $\B{G}$. We will say that $\B{G}$ is $K$--defined for a subfield $K \su \C$ if there exists defining polynomials $Q_1,\dots,Q_r \in K[X_{i,j}]$ for $\B{G}$. For a complex affine algebraic subgroup $\B{H}<\B{G} < \GL(n,\C)$, we will pick the defining polynomials for $\B{H}$ to contain a defining set for $\B{G}$ as a subset. Specifically, we have polynomials $Q_1,\dots,Q_{r_\B{G}},Q_{r_\B{G}+1},\dots,Q_{r_\B{H}} \in \C[X_{i,j}]$ such that
\begin{equation}\label{E:CompatibleDefPolys}
\B{H} = V(Q_1,\dots,Q_{r_\B{H}}), \quad \B{G} = V(Q_1,\dots,Q_{r_\B{G}}).
\end{equation}
If $\B{G}$ is defined over a number field $K$ with associated ring of integers $\mathcal{O}_K$, we can find polynomials $Q_1,\dots,Q_r \in \mathcal{O}_K[X_{i,j}]$ as a defining set by clearing denominators. In the case when $K=\Q$ and $\mathcal{O}_K = \Z$, these are multivariable integer polynomials.

\paragraph{\textbf{Spaces of representations.}}

For a fixed finite set $\mathfrak{X} = \set{x_j}_{j=1}^t$ with associated free group $F(\mathfrak{X})$ and any group $G$, the set of homomorphisms from $F(\mathfrak{X})$ to $G$, denoted by $\Hom(F(\mathfrak{X}),G)$, can be identified with $G^t$. For any point $(g_1,\dots,g_t) \in G^t$, we have an associated homomorphism $\vp_{(g_1,\dots,g_t)}\colon F(\mathfrak{X}) \to G$ given by $\vp_{(g_1,\dots,g_t)}(x_i) = g_i$. For any word $w \in F(\mathfrak{X})$, we have a function $\Eval_w\colon \Hom(F(\mathfrak{X}),G) \to G$ defined by $\Eval_w(\vp_{(g_1,\dots,g_t)}) = \vp_{(g_1,\dots,g_t)}(w) = w(g_1,\dots,g_t)$. For a finitely presented group $\Gamma$, we fix a finite presentation $\innp{\mathfrak{X};\mathfrak{R}}$ where $\mathfrak{X} = \set{\gamma_1,\dots,\gamma_t}$ is a generating set (as a monoid) and $ \mathfrak{R} = \set{r_1,\dots,r_{t'}}$ is a finite set of relations. If $\B{G}$ is a complex affine algebraic subgroup of $\GL(n,\C)$, the set $\Hom(\Gamma,\B{G})$ of homomorphisms $\rho\colon \Gamma \to \B{G}$ can be identified with an affine algebraic subvarieity of $\B{G}^t$. Specifically
\begin{equation}\label{E:RepSpaceDef}
\Hom(\Gamma,\B{G}) = \set{(g_1,\dots,g_t) \in \B{G}^t~:~r_j(g_1,\dots,g_t) = I_n \text{ for all }j}.
\end{equation}
If $\Gamma$ is finitely generated, $\Hom(\Gamma,\B{G})$ is an affine algebraic variety by the Hilbert Basis Theorem. 

$\Hom(\Gamma,\B{G})$ also has a topology induced by the analytic topology on $\B{G}^t$. There is a Zariski open subset of $\Hom(\Gamma,\B{G})$ that is smooth in the this topology called the smooth locus, and the functions $\Eval_\gamma\colon \Hom(\Gamma,\B{G}) \to \B{G}$ are analytic on the smooth locus. For any subset $S \su \Gamma$ and representation $\rho \in \Hom(\Gamma,\B{G})$, $\overline{\rho(S)}$ will denote the Zariski closure of $\rho(S)$ in $\B{G}$.

\paragraph{\textbf{Effective separability functions.}}

For a finitely generated group $\Gamma$ with a fixed finite generating set $\mathfrak{X}$, we denote the associated norm by $\norm{\cdot}_\mathfrak{X}$. Given a subgroup $\Delta_0 < \Gamma$ and $\gamma \in \Gamma - \Delta_0$, we define
\[ \D_\Gamma(\Delta_0,\gamma) = \min\set{[\Gamma:\Lambda]~:~\Delta_0 < \Lambda,~\gamma \notin \Lambda}. \]
When $\Delta_0$ is separable in $\Gamma$, $\D(\Delta_0,\gamma) < \iny$ for all $\gamma \in \Gamma - \Delta_0$. The maximal value of $\D(\Delta_0,\gamma)$ ranging over all $\gamma \in \Gamma - \Delta_0$ with $\norm{\gamma}_\mathfrak{X} \leq m$ will be denoted by $\Sep_\Gamma(\Delta_0,m)$. Note that in \cite{HP}, $\D_\Gamma(\Delta_0, \gamma)$ and $\Sep_\Gamma(\Delta_0,m)$ are denoted by $D_\Gamma^{\Omega_{\Delta_0}}(\gamma)$ and $\Sep_{\Gamma, \mathfrak{X}}(\Delta_0,m)$, respectively. 

Recall that for a pair of functions $f_1,f_2\colon \N \to \N$, we say $f_1 \preceq f_2$ if there exists a constant $C > 0$ such that $f_1(m) \leq Cf_2(Cm)$ for all $m$. When $f_1 \preceq f_2$ and $f_2 \preceq f_1$, we write $f_1 \approx f_2$. The function $\Sep_\Gamma(\Delta_0,m)$ above depends on the choice of the generating set $\mathfrak{X}$. However, it is straightforward to see verify that $\Sep_{\Gamma,\mathfrak{X}}(\Delta_0,m) \approx \Sep_{\Gamma,\mathfrak{X}'}(\Delta_0,m)$ holds for any finite generating sets $\mathfrak{X},\mathfrak{X}'$ of $\Gamma$. We will suppress any dependence of the function $\Sep_\Gamma(\Delta_0,m)$ on the generating set $\mathfrak{X}$.

\section{Evaluation maps}\label{S:EvalMaps}

Throughout this section, $\Gamma$ will be a finitely generated group and $\Delta_0$ a finitely generated subgroup of $\Gamma$. For a complex affine algebraic group $\B{G}$ and any representation $\rho_0 \in \Hom(\Gamma,\B{G})$, we have the closed affine subvariety 
\[ \mathcal{R}_{\rho_0,\Delta_0}(\Gamma,\B{G}) = \set{\rho \in \Hom(\Gamma,\B{G})~:~\rho_0(\delta) = \rho(\delta) \text{ for all }\delta \in \Delta_0}. \]
We say that $\rho_0$ \textbf{distinguishes} $\Delta_0$ from $\gamma$ if the restriction of $\Eval_\gamma$ to $\mathcal{R}_{\rho_0,\Delta_0}(\Gamma,\Delta)$ is non-constant, that is to say, there exists $\rho \in \Hom(\Gamma, G)$ such that $\rho|_{\Delta_0} = \rho_0$ and $\rho(\gamma) \neq \rho_0(\gamma)$. We say that $\rho_0$ \textbf{weakly distinguishes} $\Delta_0$ in $\Gamma$, if $\rho_0$ distinguishes $\Delta_0$ from $\gamma$ for all $\gamma \in \Gamma - \Delta_0$. We say that $\rho_0$ \textbf{distinguishes} $\Delta_0$ in $\Gamma$ if for each finite set $S \su \Gamma - \Delta_0$, there are $\rho,\rho' \in \mathcal{R}_{\rho_0,\Delta_0}(\Gamma,\B{G})$ such that $\Eval_\gamma(\rho) \ne \Eval_\gamma(\rho')$ for all $\gamma \in S$. Finally, we say that $\rho_0$ \textbf{strongly distinguishes} $\Delta_0$ in $\Gamma$ if there are $\rho,\rho'\in\mathcal{R}_{\rho_0,\Delta_0}(\Gamma,\B{G})$ such that $\rho(\gamma) \ne \rho'(\gamma)$ for all $\gamma \in \Gamma - \Delta_0$.

\begin{lemma}\label{L:StrongToZar}
Let $\Gamma$ be a finitely generated group, $\B{G}$ is a complex algebraic group, and $\Delta_0$ a finitely generated subgroup of $\Gamma$. If $\Delta_0$ is strongly distinguished by a representation $\rho_0 \in \Hom(\Gamma,\B{G})$, then there exists a representation $\Phi\colon \Gamma \to \B{G} \times \B{G}$ such that $\Phi(\Gamma) \cap \overline{\Phi(\Delta_0)}  = \Phi(\Delta_0)$, where $\overline{\Phi(\Delta_0)}$ is the Zariski closure of $\Phi(\Delta_0)$ in $\B{G} \times \B{G}$.
\end{lemma}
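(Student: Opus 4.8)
The plan is to take $\Phi$ to be the ``diagonal'' of the two representations furnished by the hypothesis. Concretely, let $\rho,\rho'\in\mathcal{R}_{\rho_0,\Delta_0}(\Gamma,\B{G})$ be representations with $\rho(\gamma)\ne\rho'(\gamma)$ for every $\gamma\in\Gamma-\Delta_0$, and set $\Phi=(\rho,\rho')\colon\Gamma\to\B{G}\times\B{G}$, i.e.\ $\Phi(\gamma)=(\rho(\gamma),\rho'(\gamma))$. This is a homomorphism into the complex algebraic group $\B{G}\times\B{G}$ (which we regard as sitting block-diagonally in $\GL(2n,\C)$) because each coordinate is.

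First I would record the key structural fact: since $\rho$ and $\rho'$ both restrict to $\rho_0$ on $\Delta_0$, we have $\Phi(\delta)=(\rho_0(\delta),\rho_0(\delta))$ for all $\delta\in\Delta_0$, so $\Phi(\Delta_0)$ is contained in the diagonal subgroup $\mathrm{diag}(\B{G})=\set{(g,g):g\in\B{G}}$ of $\B{G}\times\B{G}$. The diagonal is Zariski closed in $\B{G}\times\B{G}$: it is the common zero set of the linear polynomials $X_{i,j}-Y_{i,j}$ in the two blocks of matrix coordinates (equivalently, the preimage of $I_n$ under $(g,h)\mapsto gh^{-1}$). Hence the Zariski closure satisfies $\overline{\Phi(\Delta_0)}\subseteq\mathrm{diag}(\B{G})$ as well.

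Next I would compute $\Phi(\Gamma)\cap\mathrm{diag}(\B{G})$. A point $\Phi(\gamma)=(\rho(\gamma),\rho'(\gamma))$ lies on the diagonal if and only if $\rho(\gamma)=\rho'(\gamma)$; by the strong distinguishing hypothesis this forces $\gamma\in\Delta_0$, and conversely every $\gamma\in\Delta_0$ satisfies $\rho(\gamma)=\rho_0(\gamma)=\rho'(\gamma)$. Thus $\Phi(\Gamma)\cap\mathrm{diag}(\B{G})=\Phi(\Delta_0)$. Combining this with the previous paragraph gives the chain of inclusions
\[
\Phi(\Delta_0)\;\subseteq\;\Phi(\Gamma)\cap\overline{\Phi(\Delta_0)}\;\subseteq\;\Phi(\Gamma)\cap\mathrm{diag}(\B{G})\;=\;\Phi(\Delta_0),
\]
so all the terms coincide, which is exactly the assertion $\Phi(\Gamma)\cap\overline{\Phi(\Delta_0)}=\Phi(\Delta_0)$.

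There is essentially no serious obstacle here: the content of the lemma is really the hypothesis ``strongly distinguishes,'' and the proof amounts to the observation that the diagonal of $\B{G}\times\B{G}$ is Zariski closed, which collapses the intersection with $\Phi(\Gamma)$ back down to $\Phi(\Delta_0)$. The only points requiring (routine) verification are that $\Phi$ is a homomorphism and that $\mathrm{diag}(\B{G})$ is cut out by polynomial equations inside $\B{G}\times\B{G}$; both are immediate.
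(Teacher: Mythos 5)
Your proof is correct and follows essentially the same route as the paper: take $\Phi=\rho\times\rho'$, observe that $\Phi(\Delta_0)$ lies in the Zariski-closed diagonal $\mathrm{Diag}(\B{G})$ while $\Phi(\gamma)$ misses it for $\gamma\in\Gamma-\Delta_0$, and conclude $\Phi(\Gamma)\cap\overline{\Phi(\Delta_0)}=\Phi(\Delta_0)$. Your write-up just spells out a couple of steps (closedness of the diagonal, the chain of inclusions) that the paper leaves implicit.
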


\begin{proof}
By definition, there are representations $\rho,\rho' \in \mathcal{R}_{\rho_0,\Delta_0}(\Gamma,\B{G})$ such that $\gamma(\rho) \ne \gamma(\rho')$ for all $\gamma \in \Gamma - \Delta_0$. Take $\Phi\colon \Gamma \to \B{G} \times \B{G}$ given by $\Phi = \rho \times \rho'$. By construction, $\Phi(\Delta_0) < \mathrm{Diag}(\B{G})$ and $\Phi(\gamma) \notin \mathrm{Diag}(\B{G})$ for all $\gamma \in \Gamma - \Delta_0$. In particular, $\overline{\Phi(\Delta_0)} < \mathrm{Diag}(\B{G})$ since $\mathrm{Diag}(\B{G})$ is Zariski closed. Hence, $\Phi(\Delta_0) = \overline{\Phi(\Delta_0)} \cap \Phi(\Gamma)$.
\end{proof}

\begin{lemma}\label{L:WeakToStrong}
Let $\Gamma$ be a finitely generated group, $\B{G}$ a complex algebraic group, and $\Delta_0$ a finitely generated subgroup of $\Gamma$. If $\Delta_0$ is distinguished by a representation $\rho_0 \in \Hom(\Gamma,\B{G})$, then $\rho_0$ strongly distinguishes $\Delta_0$.
\end{lemma}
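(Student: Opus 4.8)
The goal is to upgrade "distinguishes" (an existence-of-a-pair-per-finite-set condition) to "strongly distinguishes" (a single pair that works simultaneously for all $\gamma \in \Gamma - \Delta_0$). The natural framework is the representation variety $\mathcal{R} = \mathcal{R}_{\rho_0,\Delta_0}(\Gamma,\B{G})$, which is an affine algebraic variety over $\C$. For each $\gamma \in \Gamma - \Delta_0$ consider the "bad set"
\[
Z_\gamma = \set{(\rho,\rho') \in \mathcal{R} \times \mathcal{R}~:~\rho(\gamma) = \rho'(\gamma)},
\]
which is Zariski closed in $\mathcal{R} \times \mathcal{R}$, being the preimage of the diagonal under the morphism $(\rho,\rho') \mapsto (\Eval_\gamma(\rho), \Eval_\gamma(\rho'))$. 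A pair $(\rho,\rho')$ strongly distinguishes $\Delta_0$ precisely when it lies in the complement of $\bigcup_{\gamma \in \Gamma - \Delta_0} Z_\gamma$. So the plan is to show this countable union of proper closed subsets is not all of $\mathcal{R} \times \mathcal{R}$, and conclude using a Baire-category argument that a good pair exists.

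\textbf{Key steps.} First I would observe that $\mathcal{R} \times \mathcal{R}$ is nonempty and irreducible-or-at-least has a component on which things are non-degenerate — more precisely, I would pass to an irreducible component $W$ of $\mathcal{R} \times \mathcal{R}$ containing the "doubled diagonal" point $(\rho_0,\rho_0)$, or work componentwise. Second, the crucial point: for each fixed $\gamma \in \Gamma - \Delta_0$, I claim $Z_\gamma$ is a \emph{proper} closed subset of (the relevant component of) $\mathcal{R} \times \mathcal{R}$. This is exactly where the hypothesis that $\rho_0$ \emph{distinguishes} $\Delta_0$ (in the "for each finite set $S$" sense, applied with $S = \set{\gamma}$) gets used: it tells us there exist $\rho, \rho' \in \mathcal{R}$ with $\Eval_\gamma(\rho) \ne \Eval_\gamma(\rho')$, so $(\rho,\rho') \notin Z_\gamma$, hence $Z_\gamma \ne \mathcal{R}\times\mathcal{R}$; and one checks the witnessing pair can be taken on the component of interest (e.g. by taking $\rho' = \rho_0$ and $\rho$ a deformation of $\rho_0$ within $\mathcal{R}$, which is what "distinguishes from $\gamma$" literally provides). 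Third, since $\Gamma$ is finitely generated, $\Gamma - \Delta_0$ is countable, so $\bigcup_\gamma Z_\gamma$ is a countable union of proper Zariski-closed subsets of a complex affine variety. A complex affine variety of positive dimension over $\C$ is not a countable union of proper subvarieties (by Baire category on its analytic topology, or by a cardinality argument since $\C$ is uncountable), so there is a point $(\rho,\rho')$ in the complement. Such a pair satisfies $\rho|_{\Delta_0} = \rho'|_{\Delta_0} = \rho_0|_{\Delta_0}$ and $\rho(\gamma) \ne \rho'(\gamma)$ for every $\gamma \in \Gamma - \Delta_0$, which is precisely the assertion that $\rho_0$ strongly distinguishes $\Delta_0$.

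\textbf{Main obstacle.} The delicate point is Step 2 together with the irreducibility/dimension bookkeeping: one must be careful that $\mathcal{R}$ (or the component of $\mathcal{R}\times\mathcal{R}$ one restricts to) actually has positive dimension so that the Baire-category step has content, and that the pair produced by "distinguishes from $\gamma$" lies on that component rather than on some other component where $Z_\gamma$ might be the whole thing. The clean way around this is to fix a single irreducible component $V_0$ of $\mathcal{R}$ through $\rho_0$ on which $\Eval_\gamma$ is nonconstant for every $\gamma$ (such a component exists because "$\rho_0$ distinguishes $\Delta_0$ from $\gamma$" says $\Eval_\gamma$ is nonconstant on $\mathcal{R}$ near $\rho_0$, hence on some component through $\rho_0$, and after a Baire argument one can arrange a single component works for all $\gamma$ simultaneously), and then run the above argument inside $V_0 \times V_0$, where each $Z_\gamma \cap (V_0\times V_0)$ is a proper closed subset of an irreducible variety of positive dimension. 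Once that is set up, the countable-union-of-subvarieties argument finishes it.
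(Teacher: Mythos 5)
Your overall strategy (cover $\mathcal{R}\times\mathcal{R}$, where $\mathcal{R}=\mathcal{R}_{\rho_0,\Delta_0}(\Gamma,\B{G})$, by the closed sets $Z_\gamma$ and rule out a countable cover by proper closed subsets) is the Baire-category route that the paper only mentions in a remark, but the step where you locate a single irreducible component has a genuine gap. The definition of ``$\rho_0$ distinguishes $\Delta_0$ from $\gamma$'' provides only \emph{some} $\rho\in\mathcal{R}$ with $\rho(\gamma)\ne\rho_0(\gamma)$; it gives no deformation of $\rho_0$ and says nothing about $\rho$ lying near $\rho_0$ or on an irreducible component containing $\rho_0$. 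For instance $\rho_0$ could be an isolated point of $\mathcal{R}$, in which case there is no component through $\rho_0$ on which any $\Eval_\gamma$ is nonconstant, yet the hypotheses of the lemma can perfectly well hold (with the witnesses living on other components, and the eventual good pair lying in a product of two \emph{different} components, not in $V_0\times V_0$). So the claim ``nonconstant on $\mathcal{R}$ near $\rho_0$, hence on some component through $\rho_0$'' is false, and the parenthetical ``after a Baire argument one can arrange a single component works for all $\gamma$ simultaneously'' is precisely the missing argument. Note also that, as written, your propriety of $Z_\gamma$ uses only the singleton case $S=\{\gamma\}$, i.e. only \emph{weak} distinguishing; since a reducible variety can be covered by countably many proper closed subsets (different components swallowed by different $Z_\gamma$), any proof that never invokes the finite-set hypothesis cannot be complete, and indeed the paper keeps the weak and non-weak notions separate.

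The repair is a pigeonhole over the finitely many irreducible components of $\mathcal{R}\times\mathcal{R}$ rather than a choice of component through $\rho_0$: if $\bigcup_{\gamma\in\Gamma-\Delta_0} Z_\gamma$ covered $\mathcal{R}\times\mathcal{R}$, then, since an irreducible complex affine variety is not a countable union of proper closed subsets (trivially so in dimension zero, where proper closed subsets are empty --- your positive-dimension worry is vacuous), each component would be contained in a single $Z_{\gamma_i}$, so finitely many $Z_{\gamma_1},\dots,Z_{\gamma_k}$ would already cover $\mathcal{R}\times\mathcal{R}$; applying the hypothesis that $\rho_0$ distinguishes $\Delta_0$ to the finite set $S=\{\gamma_1,\dots,\gamma_k\}$ produces a pair $(\rho,\rho')$ outside all of them, a contradiction. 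This is exactly where the ``for each finite set $S$'' hypothesis must enter, and the resulting good component need not contain $(\rho_0,\rho_0)$ nor be of the form $V_0\times V_0$. For comparison, the paper argues differently: it takes representations $\rho_j$ adapted to an exhaustion $S_j$ of $\Gamma-\Delta_0$ and passes to an ultraproduct representation $\rho_\omega$, which together with $\rho_0$ gives the strongly distinguishing pair; once your component step is fixed as above, your argument is a correct alternative along the lines the paper's remark anticipates.
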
 

\begin{proof}
We order $\Gamma - \Delta_0 = \set{\gamma_1,\gamma_2,\dots}$ and for each $j \in \N$, define $S_j = \set{\gamma_i}_{i=1}^j$. As $\rho_0$ distinguishes $\Delta_0$, for each $j \in \N$, there exists $\rho_j \in \Hom(\Gamma,\B{G})$ such that $\rho_j(\delta) = \rho_0(\delta)$ for all $\delta \in \Delta_0$ and $\rho_j(\gamma_i) \ne \rho_0(\gamma_i)$ for all $1 \leq i \leq j$. Selecting a non-principal ultrafilter $\omega$ on $\N$, we have the associated ultraproduct representation $\rho_\omega\colon \Gamma \to \B{G}$ (cf  \cite{Hal95}). If $\gamma \in \Gamma - \Delta_0$, then $\rho_j(\gamma) \ne \rho_0(\gamma)$ for a cofinite set of $j \in \N$ and so $\rho_\omega(\gamma) \ne \rho_0(\gamma)$. Similar, if $\delta \in \Delta_0$, then $\rho_j(\delta) = \rho_0(\delta)$ for all $j \in \N$ and so $\rho_\omega(\delta) = \rho_0(\delta)$.  In particular, $\rho_0$ strongly distinguishes $\Delta_0$.
\end{proof}

\begin{remark}
Lemma \ref{L:WeakToStrong} can also be proved using the Baire Category Theorem.
\end{remark}

\begin{cor}\label{P:MainTechProp}
Let $\Gamma$ be a finitely generated group, $\B{G}$ is a complex algebraic group, and $\Delta_0$ a finitely generated subgroup of $\Gamma$. If $\Delta_0$ is distinguished by a representation $\rho_0 \in \Hom(\Gamma,\B{G})$, then there exists a representation $\Phi\colon \Gamma \to \B{G} \times \B{G}$ such that $\Phi(\Gamma) \cap \overline{\Phi(\Delta_0)}  = \Phi(\Delta_0)$, where $\overline{\Phi(\Delta_0)}$ is the Zariski closure of $\Phi(\Delta_0)$ in $\B{G} \times \B{G}$.
\end{cor}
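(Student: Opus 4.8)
The plan is to deduce Corollary~\ref{P:MainTechProp} by simply chaining together the two preceding lemmas. The hypothesis is that $\Delta_0$ is distinguished by $\rho_0 \in \Hom(\Gamma,\B{G})$. By Lemma~\ref{L:WeakToStrong}, being distinguished by $\rho_0$ implies $\rho_0$ strongly distinguishes $\Delta_0$. Then Lemma~\ref{L:StrongToZar} applies verbatim to $\rho_0$ (which now strongly distinguishes $\Delta_0$) and produces the desired representation $\Phi\colon \Gamma \to \B{G}\times\B{G}$ with $\Phi(\Gamma) \cap \overline{\Phi(\Delta_0)} = \Phi(\Delta_0)$.

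First I would note that no new construction is needed: the composite $\text{Lemma~\ref{L:WeakToStrong}}\Rightarrow\text{Lemma~\ref{L:StrongToZar}}$ gives exactly the conclusion of the corollary. So the ``proof'' amounts to invoking these two results in sequence, and I would write it in one or two sentences. There is essentially no obstacle here; the only thing to be slightly careful about is that the hypotheses match up (both lemmas assume $\Gamma$ finitely generated, $\B{G}$ a complex algebraic group, and $\Delta_0$ finitely generated, which are exactly the standing hypotheses of the corollary), and that ``distinguished'' in the statement is the same notion used as the hypothesis of Lemma~\ref{L:WeakToStrong} and not one of the other variants (weakly / strongly distinguished) defined earlier in the section. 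Once that is confirmed, the argument is immediate.

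\begin{proof}
By Lemma \ref{L:WeakToStrong}, since $\Delta_0$ is distinguished by $\rho_0$, the representation $\rho_0$ strongly distinguishes $\Delta_0$. Applying Lemma \ref{L:StrongToZar} to $\rho_0$ then produces a representation $\Phi\colon \Gamma \to \B{G} \times \B{G}$ with $\Phi(\Gamma) \cap \overline{\Phi(\Delta_0)} = \Phi(\Delta_0)$, as desired.
\end{proof}
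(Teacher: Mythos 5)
Your proposal is correct and follows exactly the paper's own proof: the authors likewise cite Lemma \ref{L:WeakToStrong} to upgrade ``distinguished'' to ``strongly distinguished,'' and then invoke Lemma \ref{L:StrongToZar} to obtain $\Phi$. Nothing further is needed.
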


\begin{proof}
Since $\Delta_0$ is distinguished by $\rho_0$, it follows from Lemma \ref{L:WeakToStrong} that $\Delta_0$ is strongly distinguished by $\rho_0$. Hence, by Lemma \ref{L:StrongToZar}, we obatin the desired representation $\Phi\colon \Gamma \to \B{G} \times \B{G}$.
\end{proof}

\subsection{Twisting by automorphisms}

Given an automorphism $\psi_0 \in \Aut(\Gamma)$, we define
\[ \Aut_{\psi_0,\Delta_0}(\Gamma) = \set{\psi \in \Aut(\Gamma)~:~\psi_{|\Delta_0} = (\psi_0)_{|\Delta_0}}. \]
For each $\gamma \in \Gamma$, we have the function $\Eval_{\Aut,\gamma}\colon \Aut(\Gamma) \to \Gamma$ defined by $\Eval_{\Aut,\gamma}(\psi) = \psi(\gamma)$. We say that $\Delta_0$ is \textbf{weakly $\psi_0$--distinguished in $\Gamma$} if $\Eval_{\Aut,\gamma}$ is non-constant on $\Aut_{\psi_0,\Delta_0}(\Gamma)$ for all $\gamma \in \Gamma - \Delta_0$. We say that $\Delta_0$ is \textbf{$\psi_0$--distinguished} if for any finite set $S$ of $\Gamma - \Delta_0$, there are automorphisms $\psi_S,\psi_S' \in \Aut_{\psi_0,\Delta_0}(\Gamma)$ such that $\Eval_{\Aut,\gamma}(\psi_S) \ne \Eval_{\Aut,\gamma}(\psi_S')$, i.e. $\psi_S(\gamma) \ne \psi'_S(\gamma)$, for all $\gamma \in S$. Finally, we say $\Delta_0$ is \textbf{strongly $\psi_0$--distinguished} if there exist $\psi,\psi'\in \Aut_{\psi_0,\Delta_0}(\Gamma)$ such that $\psi(\gamma) \ne \psi'(\gamma)$ for all $\gamma \in \Gamma - \Delta_0$.

\begin{lemma}\label{L:Auto-to-Rep}
If $\Gamma$ is a finitely generated group and $\Delta_0$ is (weakly, strongly) $\psi_0$--distinguished in $\Gamma$, then for any complex algebraic group $\B{G}$ and any injective representation $\rho \in \mathcal{R}(\Gamma,\B{G})$, $\Delta_0$ is (weakly, strongly) distinguished by $\rho \circ \psi_0$ in $\Gamma$.
\end{lemma}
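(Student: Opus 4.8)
The plan is to transport the distinguishing structure from $\Aut(\Gamma)$ to $\Hom(\Gamma,\B{G})$ along the orbit map determined by a fixed injective representation $\rho$. The key observation is that precomposition with automorphisms gives a natural map $\Aut(\Gamma) \to \Hom(\Gamma,\B{G})$, $\psi \mapsto \rho \circ \psi$, and that this map intertwines the two families of evaluation functions: for any $\gamma \in \Gamma$ and any $\psi \in \Aut(\Gamma)$ we have $\Eval_\gamma(\rho \circ \psi) = \rho(\psi(\gamma)) = \rho(\Eval_{\Aut,\gamma}(\psi))$. Since $\rho$ is injective, $\rho$ is a bijection onto its image, so $\Eval_{\Aut,\gamma}(\psi) \ne \Eval_{\Aut,\gamma}(\psi')$ if and only if $\Eval_\gamma(\rho \circ \psi) \ne \Eval_\gamma(\rho \circ \psi')$. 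This single equivalence is the engine behind all three cases.

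First I would record the trivial compatibility with the ``same on $\Delta_0$'' condition: if $\psi \in \Aut_{\psi_0,\Delta_0}(\Gamma)$, then $(\rho \circ \psi)|_{\Delta_0} = \rho \circ (\psi|_{\Delta_0}) = \rho \circ (\psi_0|_{\Delta_0}) = (\rho \circ \psi_0)|_{\Delta_0}$, so $\rho \circ \psi$ lies in $\mathcal{R}_{\rho \circ \psi_0, \Delta_0}(\Gamma,\B{G})$. Hence precomposition with $\rho$ restricts to a map $\Aut_{\psi_0,\Delta_0}(\Gamma) \to \mathcal{R}_{\rho \circ \psi_0, \Delta_0}(\Gamma,\B{G})$. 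Now I would handle the three cases in turn. For the weak case: given $\gamma \in \Gamma - \Delta_0$, weak $\psi_0$--distinguishedness gives $\psi, \psi' \in \Aut_{\psi_0,\Delta_0}(\Gamma)$ (or $\psi = \psi_0$ against one witness) with $\psi(\gamma) \ne \psi'(\gamma)$; applying the intertwining equivalence, $\Eval_\gamma(\rho \circ \psi) \ne \Eval_\gamma(\rho \circ \psi')$, so $\Eval_\gamma$ is non-constant on $\mathcal{R}_{\rho\circ\psi_0,\Delta_0}(\Gamma,\B{G})$, which is exactly ``$\rho \circ \psi_0$ distinguishes $\Delta_0$ from $\gamma$.'' The ``distinguished'' (finite-set) case is identical with $\gamma$ replaced by a finite set $S \su \Gamma - \Delta_0$: take the $\psi_S, \psi_S'$ provided by $\psi_0$--distinguishedness, and the intertwining relation, applied pointwise over $\gamma \in S$, shows $\Eval_\gamma(\rho \circ \psi_S) \ne \Eval_\gamma(\rho \circ \psi_S')$ for all $\gamma \in S$. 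The strong case replaces $S$ by all of $\Gamma - \Delta_0$ and requires a single pair $\psi, \psi'$ — again the same argument applies verbatim.

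There is essentially no serious obstacle here: the only thing to be careful about is that the definitions of ``distinguishes from $\gamma$'' and ``weakly distinguishes'' in \S\ref{S:EvalMaps} are phrased in terms of non-constancy of $\Eval_\gamma$ on $\mathcal{R}_{\rho_0,\Delta_0}$ rather than in terms of an explicit pair of representations, so I should make sure to note that non-constancy of $\Eval_{\Aut,\gamma}$ on $\Aut_{\psi_0,\Delta_0}(\Gamma)$ is equivalent to the existence of two automorphisms on which it differs, and likewise on the representation side. The mild point worth stating explicitly is that injectivity of $\rho$ is used only to pull inequalities back from $\B{G}$ to $\Gamma$ (so that $\psi(\gamma) \ne \psi'(\gamma)$ forces $\rho(\psi(\gamma)) \ne \rho(\psi'(\gamma))$); no faithfulness of the resulting $\rho \circ \psi_0$ as a representation of $\Gamma$, nor any algebraic-geometry input, is needed at this stage. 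I would close by remarking that combining this lemma with Corollary \ref{P:MainTechProp} immediately yields a Zariski-closed embedding of $\Delta_0$ whenever $\Delta_0$ is $\psi_0$--distinguished for some $\psi_0$ and $\Gamma$ admits an injective representation into some complex algebraic group.
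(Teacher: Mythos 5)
Your argument is correct and is essentially the paper's own proof: both rest on the observation that $\psi \mapsto \rho \circ \psi$ sends $\Aut_{\psi_0,\Delta_0}(\Gamma)$ into $\mathcal{R}_{\rho\circ\psi_0,\Delta_0}(\Gamma,\B{G})$ and that injectivity of $\rho$ turns $\psi(\gamma) \ne \psi'(\gamma)$ into $\Eval_\gamma(\rho\circ\psi) \ne \Eval_\gamma(\rho\circ\psi')$, with the three cases handled identically. No gaps; your extra remarks about the non-constancy phrasing and the role of injectivity are consistent with the paper's treatment.
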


\begin{proof}
For any $\psi,\psi' \in \Aut_{\psi_0,\Delta_0}(\Gamma)$ and $\rho \in \Hom(\Gamma,\B{G})$, we have
$(\rho \circ \psi)_{|\Delta_0} = (\rho \circ \psi')_{|\Delta_0}$. In particular, for each $\gamma \in \Gamma - \Delta_0$, there exists $\psi,\psi' \in \Aut_{\psi_0,\Delta_0}(\Gamma)$ such that $\Eval_{\Aut,\gamma}(\psi) \ne \Eval_{\Aut,\gamma}(\psi')$ since $\Delta_0$ is weakly $\psi_0$--distinguished. As $\rho$ is injective, $\rho(\psi(\gamma)) \ne \rho(\psi'(\gamma))$ and so $\Eval_\gamma(\rho \circ \psi) \ne \Eval_\gamma(\rho \circ \psi')$. By definition, $\rho \circ \psi,\rho\circ\psi' \in \mathcal{R}_{\rho \circ \psi_0,\Delta_0}(\Gamma,\B{G})$ and so $\Delta_0$ is weakly distinguished by $\rho \circ \psi_0$. The proof when $\Delta_0$ is $\psi_0$--distinguished or strongly $\psi_0$--distinguished is identical.
\end{proof}

\section{Proof of Theorem \ref{T:MainZar}}\label{S:MainProof}

Before proving Theorem \ref{T:MainZar}, we require a pair of lemmas.

\begin{lemma}\label{L:FreeAutoTwist}
If $\Lambda = \Delta_0 \ast \Delta$ with $\Delta_0 \neq \{1\}$, then there exists an automorphism of $\Lambda$ whose set of fixed points is exactly $\Delta_0$. In particular, $\Delta_0$ is strongly $\psi_0$--distinguished, where $\psi_0$ is the identity automorphism. 
\end{lemma}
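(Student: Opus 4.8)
The plan is to construct the automorphism explicitly using a standard "transvection-type" twist, and then verify that its fixed subgroup is exactly $\Delta_0$ by a normal-form argument in the free product $\Lambda = \Delta_0 \ast \Delta$. Fix a nontrivial element $a \in \Delta_0$ and a nontrivial element $b \in \Delta$ (if $\Delta = \{1\}$ the statement is trivial with $\psi_0 = \mathrm{id}$, so assume $\Delta \neq \{1\}$). First I would define $\psi\colon \Lambda \to \Lambda$ by declaring $\psi$ to be the identity on the factor $\Delta_0$ and setting $\psi(d) = b^{-1} d\, b$ for all $d$ in a generating set of $\Delta$ -- more invariantly, $\psi$ restricted to $\Delta$ is conjugation by $b$ composed with the inclusion $\Delta \hookrightarrow \Lambda$. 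By the universal property of the free product this defines a homomorphism, and it is an automorphism because it has an obvious inverse (the identity on $\Delta_0$ and conjugation by $b^{-1}$ on $\Delta$). Clearly $\Delta_0$ is contained in the fixed set of $\psi$, so the content of the lemma is the reverse inclusion.

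Next I would show the fixed subgroup is no larger than $\Delta_0$. Take $w \in \Lambda$ with $\psi(w) = w$ and write $w$ in reduced form with respect to the free product decomposition, say $w = u_1 v_1 u_2 v_2 \cdots$ where the $u_i$ lie in $\Delta_0$ and the $v_i$ lie in $\Delta$, alternating and each nontrivial (except possibly the first or last). Since $\psi$ fixes each $u_i$ and sends each $v_i$ to $b^{-1} v_i b$, applying $\psi$ and then freely reducing the resulting word $u_1 (b^{-1} v_1 b) u_2 (b^{-1} v_2 b) \cdots$ and comparing with the reduced form of $w$ forces constraints: the $b^{\pm 1}$ letters must cancel against adjacent $\Delta_0$-syllables, which is impossible unless there are no $v_i$ syllables at all (one has to be a little careful about the extreme syllables, treating the cases $w \in \Delta_0$, $w$ starting or ending in $\Delta$, and $w$ a single syllable separately). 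Hence $w$ has no $\Delta$-syllables, i.e. $w \in \Delta_0$.

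The last sentence of the lemma then follows immediately: the fixed set of $\psi$ is $\Delta_0$, so for any $\gamma \in \Lambda - \Delta_0$ we have $\psi(\gamma) \neq \gamma = \mathrm{id}(\gamma)$, and taking $\psi' = \psi$, $\psi'' = \mathrm{id} = \psi_0$ as the pair of automorphisms in $\Aut_{\psi_0,\Delta_0}(\Lambda)$ witnesses that $\Delta_0$ is strongly $\psi_0$--distinguished with $\psi_0$ the identity. (Note $\mathrm{id}$ and $\psi$ both restrict to the identity on $\Delta_0$, so they genuinely lie in $\Aut_{\psi_0,\Delta_0}(\Lambda)$.) The main obstacle is the syllable-cancellation bookkeeping in the reverse inclusion: one must handle the boundary syllables of the reduced word carefully, since a word beginning or ending with a $\Delta$-syllable behaves slightly differently under the conjugation twist than an interior syllable, and one should check that conjugating the whole of $\Delta$ by a single fixed $b \in \Delta$ (rather than by an element outside $\Delta$) still kills all fixed points outside $\Delta_0$ -- this is where choosing $b$ to be a nontrivial element of $\Delta$ that is not a proper power, or simply arguing directly with reduced forms, makes the cancellation analysis cleanest.
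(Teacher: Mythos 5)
There is a genuine gap, and it is in the very first step: your choice of conjugating element. You twist the factor $\Delta$ by a nontrivial $b \in \Delta$, i.e.\ $\psi$ is the identity on $\Delta_0$ and $d \mapsto b^{-1} d b$ on $\Delta$. But then $\psi(b) = b^{-1} b b = b$, so $b$ itself is a fixed point lying outside $\Delta_0$; more generally the fixed subgroup contains $\Delta_0 \ast C_\Delta(b)$, which is strictly larger than $\Delta_0$ whenever $\Delta \neq \{1\}$. So the reverse inclusion you hope to prove by syllable bookkeeping is simply false for this $\psi$, and no choice of $b \in \Delta$ (proper power or not) can repair it, since any element of $\Delta$ centralizes itself. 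You half-noticed this danger in your final sentence ("one should check that conjugating the whole of $\Delta$ by a single fixed $b \in \Delta$ \dots still kills all fixed points outside $\Delta_0$") but did not resolve it; it cannot be resolved. Tellingly, the nontrivial element $a \in \Delta_0$ you fixed at the start is never used --- it is exactly the element you should have conjugated by.

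The paper's proof uses precisely that correction: take a nontrivial $\delta \in \Delta_0$ and define $\psi$ to be the identity on $\Delta_0$ and $k \mapsto \delta k \delta^{-1}$ on $\Delta$. Then your normal-form strategy does work: writing $\gamma \in \Lambda - \Delta_0$ in reduced form $h_1 k_1 h_2 k_2 \cdots$ with $h_i \in \Delta_0$, $k_i \in \Delta - \{1\}$, applying $\psi$ inserts $\delta$ and $\delta^{-1}$ around each $\Delta$--syllable; these letters merge into the adjacent $\Delta_0$--syllables (they lie in the same factor), and comparing reduced forms --- it suffices to compare the first (or last) syllable: $h_1$ versus $h_1\delta$, treating the cases $h_1 = 1$ and $h_1 = \delta^{-1}$ separately --- shows $\psi(\gamma) \neq \gamma$ for every $\gamma$ containing a $\Delta$--syllable. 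With that construction your concluding paragraph (taking the pair $\psi$ and $\mathrm{id}$ in $\Aut_{\psi_0,\Delta_0}(\Lambda)$) goes through verbatim and yields strong $\psi_0$--distinguishability.
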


\begin{proof}
We assume $\Lambda \ne \Delta_0$ as that case is trivial. Fix $\delta$ a nontrivial element in $\Delta_0$. Define an automorphism $\psi\colon \Lambda \rightarrow \Lambda$ as being the identity on $\Delta_0$ and $\psi(k) = \delta\cdot k \cdot \delta^{-1}$ for all $k \in \Delta$. Given $\gamma \in \Lambda -\Delta_0$, we have a reduced expression $\gamma = h_1 k_1 \cdots h_m k_m$, where $m \geq 1$, $h_i \in  \Delta_0 -\{1\}$, and $k_i \in \Delta - \{1\}$, with the exception that $h_1$ or $k_m$ could be trivial. Thus,
\[ \psi(\gamma) = \psi(h_1 k_1 \cdots h_m k_m)= (h_1\delta)k_1(\delta^{-1}h_2\delta )\cdots  (\delta^{-1}h_m\delta)k_m(\delta^{-1}) = h_1'k_1h_2'\cdots h_m'k_m\delta^{-1}, \] 
where $h_i \neq h_i'$ for all $i=1, \dots, m$. Thus, $\psi(\gamma) \neq \gamma$ for all $\gamma \in \Lambda - \Delta_0$, so that the set of fixed points of $\psi$ is exactly $\Delta_0$. 
\end{proof}

\begin{lemma}\label{L:SurfaceAutoTwist}
If $\Sigma_g$ is a closed surface of genus $g>1$ and $\Sigma'$ is a compact, embedded, incompressible subsurface, then $\pi_1(\Sigma', p)$ is strongly $\psi_0$--distinguished in $\pi_1(\Sigma_{g}, p)$, where $\psi_0$ is the identity.
\end{lemma}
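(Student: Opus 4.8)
The plan is to reduce the surface case to the free group case handled in Lemma \ref{L:FreeAutoTwist}, using the fact that an incompressible subsurface with nonempty boundary has free fundamental group, and then to extend the relevant automorphism from the subsurface to the whole surface group. Concretely, the first step is to dispense with the degenerate cases: if $\Sigma' = \Sigma_g$ there is nothing to prove, and if $\pi_1(\Sigma')$ is trivial (i.e. $\Sigma'$ is a disk) then $\Delta_0 = \{1\}$ is handled by residual finiteness of surface groups, so I would assume $\Sigma'$ is a proper, $\pi_1$-injective subsurface with $\chi(\Sigma') < 0$ and nonempty boundary. In that case $\Delta_0 = \pi_1(\Sigma', p)$ is a finitely generated free group, and the complementary subsurface(s) together with the boundary circles give a graph-of-groups / HNN or amalgamated product decomposition of $\pi_1(\Sigma_g, p)$ in which $\Delta_0$ appears as a vertex group.

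The key step is to realize $\pi_1(\Sigma_g,p)$ as an iterated amalgamated free product or HNN extension $\Lambda = \pi_1(\Sigma_g,p)$ over the boundary curves of $\Sigma'$, and then build an automorphism $\psi$ of $\Lambda$ that is the identity on $\Delta_0$ and moves every element outside $\Delta_0$. Following the template of Lemma \ref{L:FreeAutoTwist}, I would fix a nontrivial element $\delta \in \Delta_0$ — chosen carefully, e.g. a boundary curve of $\Sigma'$ or an element not commuting with the edge groups — and define $\psi$ to conjugate the "outside" generators by $\delta$ while fixing $\Delta_0$. For this to be a well-defined automorphism one must check compatibility on the edge groups (the boundary circles); this is where the choice of $\delta$ matters, and it may be cleanest to conjugate each complementary piece by a suitable power of (or element related to) its adjacent boundary curve so that the edge relations are preserved. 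Then a normal-form / reduced-word argument in the graph-of-groups decomposition — exactly parallel to the computation $\psi(\gamma) = h_1'k_1 h_2' \cdots h_m' k_m \delta^{-1} \ne \gamma$ in Lemma \ref{L:FreeAutoTwist} — shows that no element of $\Lambda - \Delta_0$ is fixed, because applying $\psi$ to a reduced expression for such $\gamma$ introduces uncancellable conjugating factors. Once $\psi$ (and similarly a second automorphism $\psi'$, or just $\psi$ and the identity) are in hand, the conclusion that $\Delta_0$ is strongly $\psi_0$-distinguished with $\psi_0 = \mathrm{id}$ is immediate from the definition, since $\psi, \mathrm{id} \in \Aut_{\psi_0, \Delta_0}(\pi_1(\Sigma_g,p))$ and $\psi(\gamma) \ne \gamma = \mathrm{id}(\gamma)$ for all $\gamma$ outside $\Delta_0$.

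The main obstacle I anticipate is verifying that the twisting map descends to a genuine automorphism across the boundary-curve edge groups: unlike the free product case, the amalgamating subgroups are infinite cyclic and are shared between $\Delta_0$ and its complement, so naively conjugating the complement by $\delta$ need not fix these shared cyclic subgroups setwise. Handling this likely requires either (a) choosing $\delta$ to be a product of boundary curves of $\Sigma'$ with good centralizer properties, or (b) working instead with a presentation of $\pi_1(\Sigma_g,p)$ adapted to the separating/non-separating curves $\partial \Sigma'$ and defining $\psi$ generator-by-generator, then checking it preserves the surface relation. A secondary subtlety is the "reduced expression" bookkeeping when $\Sigma'$ is non-separating (so the decomposition is an HNN extension rather than an amalgam) or when $\partial \Sigma'$ has several components, where Britton's Lemma and the action on the Bass–Serre tree replace the simple free-product normal form. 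I expect the statement to be robust — strongly $\psi_0$-distinguished for $\psi_0$ the identity — because the essential mechanism (an element outside a vertex group picks up a "residual" conjugating factor that cannot be cancelled) survives in the graph-of-groups setting.
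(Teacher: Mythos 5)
Your intended mechanism---twisting along the components of $\partial\Sigma'$---is, once correctly implemented, exactly the paper's proof: the paper fixes $p\in\mathrm{Int}(\Sigma')$, takes the Dehn twists $\tau_1,\dots,\tau_b$ about the boundary components of $\Sigma'$, observes that each induces an automorphism $\psi_i$ restricting to the identity on $\pi_1(\Sigma',p)$, and uses that every based loop not homotopic into $\Sigma'$ must cross $\partial\Sigma'$, so the composite $\psi=\psi_b\circ\cdots\circ\psi_1$ moves it. Your fallback (b), ``conjugate each complementary piece by its adjacent boundary curve,'' is precisely the algebraic description of these twists, so you have not found a different route; the issue is that your proposal, as written, has a gap at the key step and the topological phrasing avoids it.

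Concretely: your primary construction---fix one nontrivial $\delta\in\Delta_0$ and conjugate the ``outside'' by $\delta$, as in Lemma \ref{L:FreeAutoTwist}---cannot in general be made well-defined. For an amalgam $A\ast_C B$ with $C=\langle c\rangle$ a boundary curve, the map ``identity on $A$, conjugation by $\delta$ on $B$'' respects the edge identification only if $\delta$ centralizes $C$; since in the relevant vertex groups the centralizer of a boundary curve is the cyclic group it generates, $\delta$ is forced to be a power of $c$, and when $\partial\Sigma'$ has several non-isotopic components no single $\delta$ serves all edges. One must twist edge-by-edge (i.e., perform the Dehn twists), and when a complementary component meets $\Sigma'$ along two or more curves the dual graph has non-tree edges, where the automorphism multiplies stable letters by edge elements rather than conjugating a vertex group---none of which your sketch carries out. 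Moreover the fixed-point verification is not ``exactly parallel'' to the free-product computation: there the inserted conjugator $\delta$ is a genuinely new syllable, whereas here the inserted elements are powers of the $c_i$, i.e., elements of the edge groups, which are exactly what normal forms in amalgams and HNN extensions allow to slide across syllables. To conclude that nothing outside $\pi_1(\Sigma')$ is fixed you need the surface-specific input that boundary curves are not proper powers and have cyclic centralizers in the adjacent vertex groups (an acylindricity-type property of the splitting), or else the paper's topological crossing argument; your proposal invokes neither. Two smaller points: your case analysis skips the annulus ($\chi(\Sigma')=0$, $\Delta_0$ infinite cyclic), which the lemma covers, and your treatment of the disk case via residual finiteness addresses separability rather than the statement actually needed, namely strong $\psi_0$--distinguishing of the trivial subgroup, which asks for an automorphism without nontrivial fixed points.
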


\begin{proof}
We assume $\pi_1(\Sigma_g) \ne \pi_1(\Sigma')$ as the alternative is trivial. We need $\psi \in \Aut_{\psi_0}(\pi_1(\Sigma_g),\pi_1(\Sigma'))$ with $\psi([\gamma]) \neq \psi_0([\gamma]) = [\gamma]$ for all $[\gamma] \in \pi_1(\Sigma_g) - \pi_1(\Sigma')$. Fixing $p \in \textrm{Int}(\Sigma')$, for every $[\gamma]$ in $\pi_1(\Sigma_g, p) - \pi_1(\Sigma', p)$ and any loop $c$ representing $[\gamma]$, we must have $c \cap \partial \Sigma' \ne \emptyset$. For each boundary component $\alpha_i$, set $\tau_i\colon \Sigma_g \rightarrow \Sigma_g$ to be Dehn twist about $\alpha_i$ for $i = 1, \dots, b$, and note  that $\tau_i$ induces an automorphism $\psi_i \in \Aut_{\psi_0}(\pi_1(\Sigma_g, p),\pi_1(\Sigma',p))$ defined by $\psi_i([\gamma]) = [\tau_i(\gamma)]$. Thus, for any $[\gamma] \in \pi_1(\Sigma_g) - \pi_1(\Sigma')$, $\psi_i([\gamma]) \neq [\gamma]$ for some $i$, and setting $\psi = \psi_b \circ \cdots \circ \psi_1$ completes the proof. 
\end{proof}

\begin{proof}[Proof of Theorem \ref{T:MainZar}]
Let $\Gamma$ be either a free group of rank $r>1$ or the fundamental group of a closed surface $\Sigma_g$ of genus $g>1$. Given a finitely generated subgroup $\Delta_0$, if $\Gamma$ is free, then by Hall \cite{Hall}, there exists a finite index subgroup $\Lambda < \Gamma$ with $\Lambda = \Delta_0 \ast \Delta$. If $\Gamma$ is the fundamental group of a closed surface, then by Scott \cite{Scott}, there is a finite cover $P\colon \Sigma_{g_0} \to \Sigma_g$ such that $\Delta_0 < P_*(\pi_1(\Sigma_{g_0}))=\Lambda$. Moreover, there exists a embedded compact subsurface $\Sigma_{\Delta_0}$ of $\Sigma_{g_0}$ with $\Delta_0 = \pi_1(\Sigma_{\Delta_0})$. In either case, we can apply Lemma \ref{L:FreeAutoTwist} or Lemma \ref{L:SurfaceAutoTwist} to see that $\Delta_0$ is strongly $\psi_0$--distinguished in $\Lambda$. For any faithful representation $\rho_0\in \Hom(\Lambda,\GL(2,\C))$, we see that $\rho_0$ strongly distinguishes $\Delta_0$ by Lemma \ref{L:Auto-to-Rep}. By Corollary \ref{P:MainTechProp}, we have $\Phi\colon \Lambda \to \GL(2,\C) \times \GL(2,\C)$ such that $\Phi(\gamma) \in \mathrm{Diag}(\GL(2,\C))$ if and only if $\gamma \in \Delta_0$. Setting $d_{\Delta_0} = [\Gamma:\Lambda]$, we have the induced representation $\mathrm{Ind}_{\Lambda}^{\Gamma}(\Phi)\colon \Gamma \to \GL(2d_{\Delta_0},\C) \times \GL(2d_{\Delta_0},\C)$. Taking $\rho = \mathrm{Ind}_{\Lambda}^\Gamma(\Phi)$, it follows by the construction of $\rho$ and from the definition of induction that, $\rho(\gamma) \in \overline{\rho(\Delta_0)}$ if and only if $\gamma \in \Delta_0$. In particular, $\rho(\Delta_0) = \rho(\Gamma) \cap \overline{\rho(\Delta_0)}$ as needed, and $\rho$ is faithful since $\rho_0$ is faithful.
\end{proof}

\section{Proof of Corollary \ref{C:EffectiveSep}}\label{S:Effective}

 The following basic result has been proven in \cite{Berg}, \cite{MS}, and \cite{McR04}.

\begin{lemma}\label{L:BasicLemma1}
Let $\B{G} < \GL(n,\C)$ be a $\overline{\Q}$--algebraic group, $\B{H} < \B{G}$ a $\overline{\Q}$--algebraic subgroup, $\Gamma < \B{G}$ a finitely generated subgroup. If $\Delta_0 = \B{H} \cap \Gamma$, then $\Delta_0$ is closed in the profinite topology.
\end{lemma}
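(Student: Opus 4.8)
The plan is to run the standard congruence-subgroup argument that underlies Mal'cev's theorem. First I would arrange the arithmetic setup: since $\Gamma$ is finitely generated and $\B{G}, \B{H}$ are defined over $\overline{\Q}$, the finitely many matrix entries of a fixed generating set of $\Gamma$, together with the finitely many coefficients of defining polynomials $Q_1, \dots, Q_{r_\B{H}}$ for $\B{H}$ (chosen compatibly with $\B{G}$ as in \eqref{E:CompatibleDefPolys}), all lie in a single number field $K$. Enlarging $K$ if necessary and inverting the finitely many rational primes dividing the determinants and the entry-denominators of the chosen generators, we obtain a ring of $S$--integers $\mathcal{O}_{K,S}$ with $\Gamma < \GL(n, \mathcal{O}_{K,S})$; after clearing denominators we may also assume $Q_1, \dots, Q_{r_\B{H}} \in \mathcal{O}_{K,S}[X_{i,j}]$, and since determinants of elements of $\Gamma$ are $S$--units, $Q_k(\gamma) \in \mathcal{O}_{K,S}$ for every $\gamma \in \Gamma$ and every $k$.

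Next, recall that $\Delta_0$ being closed in the profinite topology is equivalent to the statement that for each $\gamma \in \Gamma - \Delta_0$ there is a finite index subgroup $\Lambda < \Gamma$ with $\Delta_0 \leq \Lambda$ and $\gamma \notin \Lambda$, so fix such a $\gamma$. Since $\gamma \in \Gamma \subseteq \B{G}$ but $\gamma \notin \Delta_0 = \B{H} \cap \Gamma$, we have $\gamma \notin \B{H}$, hence some defining polynomial, say $Q_k$, satisfies $Q_k(\gamma) \neq 0$ in $\mathcal{O}_{K,S}$. Because $\mathcal{O}_{K,S}$ is a Dedekind domain and $Q_k(\gamma)$ is a nonzero element, it lies in only finitely many maximal ideals; choose a maximal ideal $\mathfrak{p} \subset \mathcal{O}_{K,S}$ with $Q_k(\gamma) \notin \mathfrak{p}$. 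Reduction modulo $\mathfrak{p}$ then yields a ring homomorphism $\mathcal{O}_{K,S} \to \mathbf{F}_{\mathfrak{p}} := \mathcal{O}_{K,S}/\mathfrak{p}$ onto a finite field, and hence a group homomorphism $\pi_{\mathfrak{p}}\colon \Gamma \to \GL(n, \mathbf{F}_{\mathfrak{p}})$ with finite image $Q := \pi_{\mathfrak{p}}(\Gamma)$.

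Finally I would check that $\pi_{\mathfrak{p}}$ separates $\gamma$ from $\Delta_0$. For any $\delta \in \Delta_0 \subseteq \B{H}$ we have $Q_j(\delta) = 0$ for all $j$, so each reduced polynomial $\overline{Q_j}$ vanishes at $\pi_{\mathfrak{p}}(\delta)$; thus $\pi_{\mathfrak{p}}(\Delta_0)$ is contained in the common zero locus $Z \subseteq \GL(n, \mathbf{F}_{\mathfrak{p}})$ of $\overline{Q_1}, \dots, \overline{Q_{r_\B{H}}}$. On the other hand $\overline{Q_k}(\pi_{\mathfrak{p}}(\gamma)) = \overline{Q_k(\gamma)} \neq 0$ in $\mathbf{F}_{\mathfrak{p}}$ by the choice of $\mathfrak{p}$, so $\pi_{\mathfrak{p}}(\gamma) \notin Z$ and therefore $\pi_{\mathfrak{p}}(\gamma) \notin \pi_{\mathfrak{p}}(\Delta_0)$. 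Setting $\Lambda = \Delta_0 \cdot \ker(\pi_{\mathfrak{p}})$ produces a subgroup of $\Gamma$ of index at most $\abs{Q} < \iny$ with $\Delta_0 \leq \Lambda$ and $\gamma \notin \Lambda$, which shows $\Delta_0$ is closed in the profinite topology. The only genuinely delicate points — and where I would be most careful — are the bookkeeping in the first paragraph (keeping the defining polynomials integral and ensuring that evaluations at elements of $\Gamma$ remain in $\mathcal{O}_{K,S}$, which uses that determinants are $S$--units) and the observation that a nonzero element of the Dedekind domain $\mathcal{O}_{K,S}$ survives modulo some maximal ideal; both are routine but essential, and with them in place the separating finite quotient is immediate.
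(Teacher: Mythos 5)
There is a genuine gap at the very first step: you assert that, because $\B{G}$ and $\B{H}$ are $\overline{\Q}$--defined, the matrix entries of a generating set of $\Gamma$ lie in a number field $K$. Nothing in the hypotheses gives this. Being $\overline{\Q}$--defined is a condition on the \emph{defining polynomials} of $\B{G}$ and $\B{H}$; the group $\Gamma$ is an arbitrary finitely generated subgroup of the complex points $\B{G}=\B{G}(\C)$, so its generators may perfectly well have transcendental entries (already for $\B{G}=\GL(n,\C)$, which is $\Q$--defined). Once you assume algebraic entries, the rest of your argument --- passing to $S$--integers $\mathcal{O}_{K,S}$, noting $Q_k(\gamma)\neq 0$ lies in only finitely many maximal ideals, and reducing modulo a maximal ideal avoiding it --- is the correct classical congruence argument and does prove the lemma in that special case; but it does not prove the lemma as stated.

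The paper's proof is devoted precisely to the point you skipped. There the field $K_\Gamma$ generated by the entries of the generators is observed to have finite transcendence degree over $\Q$, hence is isomorphic to $K(T_1,\dots,T_d)$ for a number field $K$ and a transcendence basis $T$; the ring $R_\Gamma$ generated by the entries is then a localization of $\mathcal{O}_K[T_1,\dots,T_d]$ at finitely many integers and polynomials, and Lemma 2.1 of \cite{BouMcR15} supplies a ring homomorphism $\psi_R\colon R_\Gamma\to R$ onto a \emph{finite} ring with $\psi_R\bigl(Q_{j_\gamma}(A_\gamma)\bigr)\neq 0$, after which the separation of $\rho_R(\gamma)$ from $\rho_R(\Delta_0)$ goes exactly as in your last paragraph. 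So to repair your write-up you must either add the hypothesis $\Gamma<\B{G}(\overline{\Q})$ (which is not available in the statement), or replace the ``everything lives in a number field'' step by a specialization argument of this kind: map $R_\Gamma$ to a ring of $S$--integers by sending the transcendentals $T_i$ to suitably chosen algebraic values (or quote the cited lemma directly) so that the nonzero value $Q_{j_\gamma}(A_\gamma)$ survives, and only then reduce modulo a maximal ideal. This specialization step is the essential content of the lemma; without it the proof covers only the arithmetic case.
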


We include a proof here as it is required in the proof of Corollary \ref{C:EffectiveSep}.

\begin{proof}
Given $\gamma \in \Gamma - \Delta_0$, we require a homomorphism $\vp\colon \Gamma \to Q$ with $\abs{Q}<\iny$ and $\vp(\gamma) \notin \vp(\Delta_0)$. We first select polynomials $Q_1,\dots,Q_{r_\B{G}},\dots,Q_{r_\B{H}} \in \C[X_{i,j}]$ satisfying \eqref{E:CompatibleDefPolys}. Since $\B{G},\B{H}$ are $\overline{\Q}$--defined, we can select $Q_j \in \mathcal{O}_{K_0}[X_{i,j}]$ for some number field $K_0/\Q$. We fix a finite set $\set{\gamma_1,\dots,\gamma_{r_\Gamma}}$ that generates $\Gamma$ as a monoid. In order to distinguish between the elements of $\Gamma$ as an abstract group versus the explicit elements in $\B{G}$, we set $\gamma = A_{\gamma} \in \B{G}$ for each $\gamma \in \Gamma$. In particular, we have a representation $\rho_0\colon \Gamma \lra \B{G}$
given by $\rho_0(\gamma_t) = A_{\gamma_t}$. We set $K_\Gamma$ to be the field generated over $K_0$ by the set of matrix entries $\set{(A_{\gamma_t})_{i,j}}_{t,i,j}$. It is straightforward to see that $K_\Gamma$ is independent of the choice of the generating set for $\Gamma$. Since $\Gamma$ is finitely generated, the field $K_\Gamma$ has finite transcendence degree over $\Q$ and so $K_\Gamma$ is isomorphic to a field of the form $K(T)$ where $K/\Q$ is a number field and $T = \set{T_1,\dots,T_d}$ is a transcendental basis (see \cite[Cor.~3.3.3]{Roman}). For each $A_{\gamma_t}$, we have $(A_{\gamma_t})_{i,j} = F_{i,j,t}(T)\in K_\Gamma$. In particular, we can view the $(i,j)$--entry of the matrix $A_{\gamma_t}$ as a rational function in $d$ variables with coefficients in some number field $K$. Taking the ring $R_\Gamma$ generated over $\mathcal{O}_{K_0}$ by the set $\set{(A_{\gamma_t})_{i,j}}_{t,i,j}$, $R_\Gamma$ is obtained from $\mathcal{O}_K[T_1,\dots,T_d]$ by inverting a finite number of integers and polynomials. Any ring homomorphism $R_\Gamma \to R$ induces a group homomorphism $\GL(n,R_\Gamma) \to \GL(n,R)$, and as $\Gamma < \GL(n,R_\Gamma)$, we obtain $\Gamma \to \GL(n,R)$. If $\gamma \in \Gamma - \Delta_0$, then there exists $r_\B{G} < j_\gamma \leq r_\B{H}$ such that
$P_\gamma = Q_{j_\gamma}((A_\gamma)_{1,1},\dots,(A_\gamma)_{n,n}) \ne 0$. Using Lemma 2.1 in \cite{BouMcR15}, we have a ring homomorphism $\psi_R\colon R_\Gamma \to R$ with $\abs{R} < \iny$ such that $\psi_R(P_\gamma) \ne 0$. Setting $\rho_R\colon \GL(n,R_\Gamma) \to \GL(n,R)$, we assert that $\rho_R(\gamma) \notin \rho_R(\Delta_0)$. To see this, set $\overline{A}_{\eta} = \rho_R(\eta)$ for each $\eta \in \Gamma$, and note that $\psi_R(Q_j((A_\eta)_{1,1},\dots,(A_\eta)_{n,n})) = Q_j((\overline{A}_\eta)_{1,1},\dots,(\overline{A}_\eta)_{n,n})$. For each $\delta \in \Delta_0$, we know that $Q_{j_\gamma}((A_\delta)_{i,j}) = 0$ and so $Q_j((\overline{A}_\eta)_{1,1},\dots,(\overline{A}_\eta)_{n,n}) = 0$. However, by selection of $\psi_R$, we know that $\psi_R(P_\gamma) \ne 0$ and so $\rho_R(\gamma)\notin \rho_R(\Delta_0)$.
\end{proof}

\subsection{Proof of Corollary \ref{C:EffectiveSep}}

To prove Corollary \ref{C:EffectiveSep}, we combine Theorem \ref{T:MainZar} with Lemma \ref{L:BasicLemma1}. By Theorem \ref{T:MainZar}, there exists a representation $\rho\colon \Gamma \to \GL(n,\C)$ such that if $\B{G} = \overline{\rho(\Gamma)}$ and $\B{H} = \overline{\rho(\Delta_0)}$, then $\rho(\Delta_0) = \rho(\Gamma) \cap \B{H}$. We can construct $\rho_0$ in the proof of Theorem \ref{T:MainZar} so that $\B{G},\B{H}$ are both $\overline{\Q}$--defined. Consequently, we can use Lemma \ref{L:BasicLemma1} to separate $\Delta_0$ in $\Gamma$. In order to make Lemma \ref{L:BasicLemma1} effective we need to bound the order of the ring $R$ in terms of the word length of the element $\gamma$ in the proof of Lemma \ref{L:BasicLemma1}. Lemma 2.1 from \cite{BouMcR15} bounds the size of $R$ in terms of the coefficient size and degree of the polynomial $P_\gamma$. It follows from the discussion on p.~412--413 of \cite{BouMcR15} that the coefficients and degree can be bounded in terms of the word length of $\gamma$, and the coefficients and degrees of the polynomials $Q_j$. As the functions $Q_j$ are independent of the word $\gamma$, we see that there exists a constant $D_0$ such that $\abs{R} \leq \norm{\gamma}^{D_0}$. By construction, the group $Q$ needed in Corollary \ref{C:EffectiveSep} for $\gamma$ is a subgroup of $\GL(n,R)$ and so $\abs{Q} \leq \abs{R}^{n^2} \leq \norm{\gamma}^{D_0n^2}$. Hence, we can take $D = D_0n^2$.

\section{Final Remarks}\label{S:FinalRemarks}

The main contribution of Corollary \ref{C:EffectiveSep} is that we establish polynomial bounds on the size of the normal core of the finite index subgroup $\Lambda$ used in separating $\gamma$ from $\Delta_0$. The methods used in \cite{HP} give linear bounds in terms of the word length of $\gamma$ on the index of the subgroup used in the separation but do not easily produce polynomial bounds for the normal core of that finite index subgroup. With care taken to make our argument optimal, we can obtain bounds on the index of the separating subgroup on the order of magnitude $C\norm{\gamma}$ as well.

Finally, to what extent Theorem \ref{T:MainZar} can be generalized to other classes of groups is unclear. We make specific use of our settings but believe that the broad framework we present should work for a larger class of groups. That prompts the following question:

\begin{ques}
Does Theorem \ref{T:MainZar} hold when $\Gamma$ is the fundamental group of a closed hyperbolic 3--manifold and $\Delta_0$ is a finitely generated, geometrically finite subgroup? Does Theorem \ref{T:MainZar} hold when $\Gamma$ is a right-angled Artin group and $\Delta_0$ is a quasi-convex subgroup?
\end{ques}

By \cite{HP}, separability of these subgroups can be done with finite index subgroups of polynomial index in $\norm{\gamma}$. That is a necessary for Theorem \ref{T:MainZar} to hold. In the above question, we optimistically believe that this condition is sufficient.



\end{document}